\newtheorem{theorem}{Theorem}[section]
\newtheorem{corollary}[theorem]{Corollary}
\newtheorem{lemma}[theorem]{Lemma}
\newtheorem{proposition}[theorem]{Proposition}
\theoremstyle{definition}
\newtheorem{definition}[theorem]{Definition}
\newtheorem{open problem}[theorem]{Open Problem}
\numberwithin{equation}{section}
\newcommand{\N}{\mathbb{N}}
\newcommand{\Z}{\mathbb{Z}}
\newcommand{\R}{\mathbb{R}}
\newcommand{\C}{\mathbb{C}}
\renewcommand{\Re}{\operatorname{Re}}
\renewcommand{\Im}{\operatorname{Im}}
\newcommand{\Mod}{\ \mathrm{mod}\ }
\newcommand{\eps}{\varepsilon}
\newcommand{\I}{\mathrm{i}}
\newcommand{\e}{\mathrm{e}}
\DeclareMathOperator{\sgn}{sgn}
\DeclareMathOperator{\SL}{SL}
\newcommand{\jac}[2]{\biggl( \frac{#1}{#2} \biggr)}
\newcommand{\nth}{n\textsuperscript{th}}
\newcommand{\kth}{k\textsuperscript{th}}
\begin{document}

\title[Pointwise behavior of Riemann's function]{The pointwise behavior of Riemann's function}

\author[F. Broucke]{Frederik Broucke}
\thanks{F. Broucke was partly supported by the Ghent University BOF-grant 01J04017 and partly by a postdoctoral fellowship (grant number 12ZZH23N) of the Research Foundation – Flanders.}

\author[J. Vindas]{Jasson Vindas}
\thanks {J. Vindas was partly supported by Ghent University through the BOF-grant 01J04017 and by the Research Foundation--Flanders through the FWO-grant G067621N}

\address{Department of Mathematics: Analysis, Logic and Discrete Mathematics\\ Ghent University\\ Krijgslaan 281\\ 9000 Gent\\ Belgium}
\email{fabrouck.broucke@UGent.be}
\email{jasson.vindas@UGent.be}

\begin{abstract}
We present a new and simple method for the determination of the pointwise H\"{o}lder exponent of Riemann's function $\sum_{n=1}^{\infty} n^{-2}\sin(\pi n^{2} x)$ at every point of the real line. In contrast to earlier approaches, where wavelet analysis and the theta modular group
were needed for the analysis of irrational points, our method is direct and elementary, being only based on the following tools from number theory and complex analysis: the evaluation of quadratic Gauss sums, the Poisson summation formula, and Cauchy's theorem.
\end{abstract}

\subjclass[2020]{Primary 42A16. Secondary 11F30; 11J70; 26A16; 26A27; 42A55} 
\keywords{Riemann ``non-differentiable'' function; pointwise H\"{o}lder exponent; quadratic Gaussian sums; fractional integrals of modular forms}

\maketitle

\section{Introduction}

According to an account of Weierstrass, Riemann would have suggested the function
\begin{equation} 
\label{Riemann's function}
 f(x) = \sum_{n=1}^{\infty}\frac{\sin(n^{2}\pi x)}{n^{2}} 
\end{equation}
as an example of a function which is continuous but nowhere differentiable. 
In 1916, 
Hardy \cite{Hardy} proved, based on earlier work by him and Littlewood \cite{HardyLittlewood}, that Riemann's function $f$ is not differentiable in a certain subset of $\R$ that contains every irrational point. This seemed to confirm the nowhere differentiability conjecture, but, on the contrary, Gerver \cite{Gerver} showed in 1970 that $f$ is actually differentiable at any rational number of the form $(2r+1)/(2s+1)$, $r,s\in\mathbb{Z}$. His results \cite{Gerver,Gerver1971} in combination with Hardy's ones imply that Riemann's function is not differentiable at any other real number. Gerver's proofs are elementary, but difficult and long. Simpler proofs were found  later  by Smith in 1972 \cite{Smith} and Itatsu in 1981 \cite{Itatsu} (see also \cite{HolschneiderTchamitchian,Mohr}).
They provided more precise information about the pointwise behavior of Riemann's ``non-differentiable'' function, which in particular gives the pointwise H\"{o}lder exponent \cite{JaffardMeyerbook} at any rational point.  This left open the determination of the exact pointwise regularity of Riemann's function at the irrationals.

Duistermaat \cite{Duistermaat} used a variant of Itatsu's approach  to exhibit explicit dependence of the $O$-constants on the analyzed rational point in the Smith--Itatsu asymptotic formulas. His error terms were strong enough to enable him to find an upper bound for the pointwise H\"{o}lder exponent at every irrational point. His upper bound depends on approximation properties of the irrational number by certain continued fractions. The problem of finding the pointwise H\"{o}lderian regularity of Riemann's function at irrational points was finally solved by Jaffard \cite{Jaffard} in 1996, who showed that Duistermaat's upper bound was sharp, that is, it is exactly equal to the pointwise H\"{o}lder exponent. Jaffard's proof is indirect and non-elementary.

A generalization of Jaffard's result to fractional integrals of modular forms has recently been obtained by Pastor \cite[Theorem 2.3(2)]{Pastor}. As Jaffard's, his proof is of Tauberian nature and makes use of wavelet analysis and the action of subgroups of the modular group.

We refer to the articles \cite{Banica-Vega,B-E-DR,Eceizabarrena20,Eceizabarrena21} for other recently revealed fascinating properties of Riemann's function.

The purpose of this paper is to provide a new and self-contained approach for the determination of the pointwise H\"{o}lder exponent of Riemann's function at every point. Our arguments are direct and lead to completely elementary and fairly short proofs that only rely on the following tools: 
 the evaluation of quadratic Gauss sums, the Poisson summation formula, and Cauchy's integral theorem. We highlight that the main novelty in our treatment is the use of a simple contour integral formula for the pointwise analysis at \emph{irrational} numbers. To the best of our knowledge, this is the first explicit proof in the literature that avoids passing through the wavelet transform for the computation of the pointwise H\"{o}lder exponent at each irrational point.

Our method can be sketched as follows. For the sake of convenience, we work with a rescaled and complex version of Riemann's function, namely,
\begin{equation}
\label{eq: Riemann complex function}
	\phi(z) = \sum_{n=1}^{\infty}\frac{1}{2\pi\I n^{2}}e(n^{2}z),
	\end{equation}
where we use the notation $e(z)$ for $\e^{2\pi\I z}$ and  $z = x+\I y$ with $y\geq0$.
 The pointwise properties of Riemann's original function can easily be deduced from those of $\phi$. We are interested in the computation of the pointwise H\"older exponent
\begin{equation}
\label{R eq: holder exponent}
\alpha(x) = \sup\{\alpha>0 \mid \phi(x+h) = P_{x}(h) + O_{x}(\, \abs{h}^{\alpha}) \mbox{ for some polynomial $P_x$} \}.
\end{equation}

  Restricting the complex variable $z$ to the upper half-plane, one has  
\begin{equation*}
\phi'(z) = \frac{1}{2}(\theta(z)-1),
\end{equation*}
where $\theta$ stands for the Jacobi theta function, namely, $\theta(z) = \sum_{n\in \Z}e(n^{2}z)$. Therefore, for each $x\in\mathbb{R}$, we obtain the basic identity
\begin{equation}
\label{R eq: key}
	\phi(x+h) -\phi(x) + \frac{1}{2}h = \frac{1}{2}\lim_{y\to0^{+}} \int_{\I y}^{h+\I y}\theta(x+z)\dif z,
\end{equation}
a formula that was already employed by Itatsu for $x=0$.

 We will exploit the formula \eqref{R eq: key} for the analysis of both rational and irrational numbers $x$. Itatsu and Duistermaat used \eqref{R eq: key} at $x=0$ and then transformation properties (under the theta modular group) to study all rational points. We take a different path, in the spirit of Smith, and use the Poisson summation formula to study the boundary behavior of $\theta(x+z)$. This directly gives an exact expression for the limit of the integral in \eqref{R eq: key} when $x$ is rational that yields an asymptotic series and that we shall discuss in Section \ref{sec: behavior at rational points}. Approximating $x$ by  the $\nth$ convergent $r_n=p_n/q_n$  in its continued fraction expansion when $x$ is irrational and using our exact formula for $\theta(r_n+z)$, one generates sufficiently good bounds for $\theta(x+z)$. The next key step in our method is to use Cauchy's theorem to transform 
\eqref{R eq: key} into

\begin{equation}
\label{R eq: key Cauchy}
	\phi(x+h) -\phi(x) + \frac{1}{2}h = - \frac{1}{2}\int_{\Gamma}\theta(x+z)\dif z,
\end{equation}
where $\Gamma$ is the part of the counterclockwise oriented boundary of the rectangle with vertices $0,$ $h,$ $\I|h|$, and $h+\I|h|$ that lies in the (open) upper half-plane. In Section \ref{sec: behavior at irrational points} we shall combine the crucial formula \eqref{R eq: key Cauchy} with our bounds for $\theta(x+z)$ to give a lower bound for $\alpha(x)$, and hence to obtain a new and simpler proof of Jaffard's theorem.

We would like to point out that a straightforward modification of our arguments from Section \ref{sec: behavior at irrational points} can also be used to deduce Pastor's result \cite[Theorem~2.3(2)]{Pastor}  without having to resort on Tauberian theorems for the wavelet transform. We briefly sketch this in Section \ref{appendix}.

We close this introduction by mentioning other generalizations of the function $\phi$. Its fractional integrals 
$$
\phi_{a}(x)=\sum_{n=1}^{\infty} \frac{e(n^2x)}{n^{a}}
$$
are covered by the results of Jaffard \cite{Jaffard} and Pastor only when $a>1$. For $a\leq 1$, $\phi_a$ is no longer a continuous function. Interestingly, Seuret and Ubis \cite{Seuret-Ubis} have performed a pointwise regularity analysis of the Fourier series $\phi_{a}$ for values of the parameter $a\in (1/2,1]$ in terms of $L^2$-type local H\"{o}lder exponents. Another important generalization of $\phi$ is obtained when one replaces
the frequency $n^{2}$ by a polynomial $P(n)$ and considers
\[
	\phi_{P, a}(x) = \sum_{n=1}^{\infty}\frac{e(P(n)x)}{n^{a}}, \quad a > 1.
\]
When the degree $k$ of $P$ is larger than $2$, the analysis becomes much more difficult: one loses the underlying modularity and Poisson summation only seems to yield good control in intervals $\abs{x-p/q} \ll q^{-k}$, which appear to be too small to obtain the pointwise 
regularity at the irrationals. Only partial results are known, but more can be said about the local
behavior ``on average'' by considering the spectrum of singularities $d_{\phi_{P,a}}(\alpha)$, which is defined as the Hausdorff dimension of the set of points $x$ at which the H\"older exponent of $\phi_{P,a}$ equals $\alpha$. We refer to \cite{ChamizoUbis2007,ChamizoUbis2014} for investigations in this direction.


\section{Preliminaries: quadratic Gauss sums}
\label{sec: preliminaries}

The following exponential sums naturally arise in the analysis of $\phi$ at rational points.
\begin{definition}
Let $q, p, m$ be integers with $(p,q)=1$. The quadratic Gauss sum $S(q,p)$ and the generalized quadratic Gauss sum $S(q,p,m)$ are defined as
\[
S(q,p) = \sum_{j=1}^{q}e\biggl( \frac{pj^{2}}{q} \biggr) \quad \mbox{and} \quad
S(q,p,m) = \sum_{j=1}^{q}e\biggl( \frac{pj^{2}+mj}{q} \biggr).
\]
\end{definition}

The quadratic Gauss sums were already evaluated by Gauss (see e.g.\ \cite[Section 9.10]{Apostol} or \cite[Section~9.3]{MontgomeryVaughan}):
\begin{theorem}
\label{th: evaluation S(q,p)}
Suppose $p$ and $q$ are positive integers with $(p,q)=1$. For odd $n$, define 
\[
	\eps_{n} = 
	\begin{dcases*}
	1 	&if $n\equiv 1\Mod4$,\\
	\I  	&if $n\equiv 3\Mod 4$. 
	\end{dcases*}
\]
Then
\[ 
S(q,p)=
	\begin{dcases}
		\eps_{q}\jac{p}{q}\sqrt{q}				&\text{if } q \text{ is odd},\\
		0								&\text{if } q \equiv 2 \Mod 4,\\
		(1+\I)\overline{\eps_{p}} 
		\jac{q}{p}\sqrt{q}	&\text{if } q\equiv 0 \Mod 4.
	\end{dcases} 
\]
Here, $\displaystyle \jac{p}{q}$ is the Jacobi symbol (see \cite{Apostol,MontgomeryVaughan}).
\end{theorem}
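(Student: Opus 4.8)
\emph{This is a classical theorem of Gauss; a full treatment is in \cite[Section 9.10]{Apostol} or \cite[Section 9.3]{MontgomeryVaughan}. The plan I would follow reduces the general modulus to prime powers by a multiplicativity identity, treats odd prime powers by a descent, and isolates the one genuinely delicate point: the determination of the \emph{sign} of the quadratic Gauss sum to an odd prime modulus.}

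First, multiplicativity. If $q = q_{1}q_{2}$ with $(q_{1},q_{2})=1$, writing each residue $j \bmod q$ as $j = q_{2}a + q_{1}b$ with $a$ ranging over residues mod $q_{1}$ and $b$ over residues mod $q_{2}$, the cross term $2pab$ is an integer, so $e(pj^{2}/q)$ factors and one obtains $S(q,p) = S(q_{1},pq_{2})\,S(q_{2},pq_{1})$. Since $(pq_{2},q_{1})=(pq_{1},q_{2})=1$, iterating reduces the evaluation of $S(q,p)$ for arbitrary $q$ to the case of prime powers $q = \ell^{k}$. The behavior of $\eps$ and of the Jacobi symbols under this factorization is then reconciled with quadratic reciprocity, which is routine congruence bookkeeping.

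Next, odd prime powers. For $\ell$ an odd prime and $k\geq 2$, writing $j = u + \ell^{k-1}v$ with $u$ mod $\ell^{k-1}$ and $v$ mod $\ell$, one has $j^{2}\equiv u^{2} + 2\ell^{k-1}uv \pmod{\ell^{k}}$; the $v$-sum vanishes unless $\ell\mid u$, and setting $u=\ell w$ collapses the remaining sum to the recursion $S(\ell^{k},p) = \ell\,S(\ell^{k-2},p)$. Descending, one is left with $k=0$ (trivial) and $k=1$. For $q=\ell$ prime, counting that each nonzero square residue is hit twice and using $\sum_{j}e(pj/\ell)=0$ gives $S(\ell,p) = \sum_{n \bmod \ell}\jac{n}{\ell}e(pn/\ell) = \jac{p}{\ell}S(\ell,1)$, while a one-line Parseval computation yields $\abs{S(\ell,1)}^{2}=\ell$. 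Taking complex conjugates and substituting $n\mapsto -n$ shows $\overline{S(\ell,1)} = \jac{-1}{\ell}S(\ell,1)$, so whether $S(\ell,1)$ is real or purely imaginary is decided by $\ell \bmod 4$, and thus $S(\ell,1) = \pm\eps_{\ell}\sqrt{\ell}$. That the sign is $+$ --- Gauss's celebrated sign determination --- is the one step that is not a formal manipulation; it can be obtained, for instance, from the theta transformation formula via Poisson summation (Dirichlet's argument, and Poisson summation is in any case a tool used later in this paper), from Schur's computation of the eigenvalues of the finite Fourier transform, or from a contour-integral evaluation in the style of Kronecker. This is the main obstacle.

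Finally, powers of $2$. One has $S(2,p) = 1 + (-1)^{p} = 0$ for $p$ odd, which with multiplicativity disposes of $q\equiv 2 \pmod 4$, and $S(4,p) = 2(1+\I^{p})$ is computed directly, matching $(1+\I)\overline{\eps_{p}}\jac{4}{p}\sqrt{4}$. For $q = 2^{k}$ with $k\geq 3$, splitting over even and odd $j$ gives $\sum_{j \text{ even}} = 2\,S(2^{k-2},p)$, while for odd $j = 2m+1$ the identity $j^{2} = 8\binom{m+1}{2}+1$ turns the odd part into $e(p/2^{k})$ times a mixed linear--quadratic Gauss sum of smaller modulus, evaluated by completing the square; the recursion terminates at the base cases just treated. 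Apart from the odd-prime sign determination, every step is elementary finite-sum algebra together with congruence checking.
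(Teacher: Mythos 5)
The paper offers no proof of this statement: it is quoted as a classical theorem of Gauss with references to Apostol and Montgomery--Vaughan, so there is no in-paper argument to compare against. Your outline is the standard textbook proof and is essentially sound: the multiplicativity identity $S(q_{1}q_{2},p)=S(q_{1},pq_{2})S(q_{2},pq_{1})$, the descent $S(\ell^{k},p)=\ell\,S(\ell^{k-2},p)$ for odd $\ell$, the reduction $S(\ell,p)=\jac{p}{\ell}S(\ell,1)$ with $\abs{S(\ell,1)}=\sqrt{\ell}$, and the $2$-power base cases are all correct. You are also right to single out the sign of $S(\ell,1)$ as the one non-formal step; note that the Poisson-summation route you mention is particularly natural here, since it is exactly the technique the paper uses in Lemma 3.1. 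Two small caveats. First, in the odd-$j$ part of the $2$-power recursion the linear coefficient is odd, so you cannot literally complete the square modulo $2^{k-2}$; you need either the $S(4q,p)$ device of the paper's equation (2.2) or the simpler observation that the shift $j\mapsto j+2^{k-2}$ kills the odd part outright for $k\ge 4$, leaving only $k=3$ to compute by hand. Second, the ``routine congruence bookkeeping'' that reconciles the multiplicative splitting with the closed form in terms of $\eps_{q}$ and the Jacobi symbol is precisely where quadratic reciprocity enters (indeed the closed form essentially encodes reciprocity), so that step deserves to be written out if the proof is to be self-contained rather than an outline.
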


The generalized quadratic Gauss sums $S(q,p,m)$ can be related to $S(q,p)$ as follows. Let $p^{\star}$ be the multiplicative inverse of $p \Mod q$. Suppose first that $m \equiv 2m' \Mod q$ for some $m'$. Then we can complete the square to get
\begin{equation}
\label{eq: relate S(q,p,m) to S(q,p) case 1}
S(q,p,m) = \sum_{j=1}^{q}e\biggl(\frac{p(j+p^{\star}m')^{2}}{q}\biggr) e\biggl(-\frac{p^{\star}m'^{2}}{q}\biggr) = e\biggl(-\frac{p^{\star}m'^{2}}{q}\biggr)S(q,p).
\end{equation}
If there is no such $m'$, then $q$ is even and $m$ odd. In this case we have
\[
S(4q,p) 	= \sum_{j=1}^{2q}e\biggl( \frac{p(2j+p^{\star}m)^{2}}{4q}\biggr) + \sum_{j=1}^{2q}e\biggl( \frac{p(2j)^{2}}{4q}\biggr)
		= 2e\biggl(\frac{p^{\star}m^{2}}{4q}\biggr)S(q,p,m) + 2S(q,p), 
\]
since $2j+p^{\star}m$ runs over all odd residues mod $4q$ when $j$ runs over $\{1,\ldots,2q\}$. Therefore, 
\begin{equation}
\label{eq: relate S(q,p,m) to S(q,p) case 2}
S(q,p,m) = 
	\begin{dcases}
		\frac{1}{2}e\biggl(-\frac{p^{\star}m^{2}}{4q}\biggr) S(4q,p)		&\text{if } q\equiv 2 \mod 4,\\
		0												&\text{if } q\equiv 0 \mod 4.
	\end{dcases}
\end{equation}


\section{behavior at rational points}
\label{sec: behavior at rational points}

In this section we deduce an asymptotic expansion for $\phi$ at every  rational number.  We first prove a simple but crucial lemma that describes the behavior of $\theta$ near rationals\footnote{In the language of modular forms, this lemma gives the expansion of $\theta$ at the cusps.}. This lemma will be used again in Section \ref{sec: behavior at irrational points} to derive bounds for $\theta$ near irrational points. For a complex number $z\neq 0$, we define $z^{-1/2}$ via the principal branch of the logarithm continuously extended to the negative real axis from the upper half-plane, i.e.,\ $\arg(z)\in (-\pi,\pi]$. Accordingly, our convention is thus  $t^{1/2}= \I \abs{t}^{1/2}$ for $t<0,$ which simplifies the writing of some formulas below.

\begin{lemma}
\label{lem: theta in rationals}
Suppose $1\leq p \le q$, $(p,q)=1$ and $\Im z >0$. Then
\[
	\theta\biggl( \frac{p}{q} + z \biggr) = \frac{\e^{\pi\I/4}}{q\sqrt{2}}z^{-1/2}\Biggl( S(q,p) + 
		2\sum_{m=1}^{\infty}S(q,p,m)\exp\biggl( -\frac{\I \pi m^{2}}{2q^{2}z}\biggr)\Biggr). 
\]
\end{lemma}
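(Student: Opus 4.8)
The plan is to compute $\theta(p/q + z)$ by splitting the sum $\sum_{n \in \Z} e(n^2(p/q+z))$ into residue classes modulo $q$ and then applying the Poisson summation formula to each arithmetic progression. Writing $n = qk + j$ with $j$ ranging over a complete residue system and $k \in \Z$, one has $e(n^2 p/q) = e(pj^2/q)$, so
\[
\theta\biggl(\frac{p}{q}+z\biggr) = \sum_{j=1}^{q} e\biggl(\frac{pj^2}{q}\biggr) \sum_{k \in \Z} e\bigl((qk+j)^2 z\bigr).
\]
The inner sum is, up to the Gaussian factor, a theta-like series in the variable $q^2 z$ with a linear shift, to which Poisson summation applies: for $\Im(q^2 z) > 0$,
\[
\sum_{k \in \Z} e\bigl((qk+j)^2 z\bigr) = \frac{1}{q} \sum_{m \in \Z} \int_{\R} e\bigl((qt+j)^2 z\bigr) e(-mt)\dif t,
\]
and each integral is a shifted Gaussian integral that evaluates in closed form, producing a factor $z^{-1/2}$ (here the branch convention fixed just before the lemma is exactly what makes the Gaussian integral formula $\int_\R e^{-\pi a u^2}\dif u = a^{-1/2}$ valid for $\Re a \ge 0$, $a \ne 0$) together with an exponential $\exp(-\I\pi m^2/(2q^2 z))$ and a phase $e(mj/q)$ or similar.

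After substituting back, I would interchange the sum over $j$ and the sum over $m$, collecting the $j$-dependent phases into $\sum_{j=1}^{q} e\bigl((pj^2 + (\text{linear in }j)\,)/q\bigr)$, which is precisely a generalized quadratic Gauss sum $S(q,p,m)$ (with $S(q,p,0) = S(q,p)$ for the $m=0$ term). The final step is to reindex the sum over $m \in \Z$: one checks that the $m$ and $-m$ terms combine — the exponential factor $\exp(-\I\pi m^2/(2q^2 z))$ is even in $m$, and the Gauss sum satisfies $S(q,p,-m) = S(q,p,m)$ (immediate from reindexing $j \mapsto q-j$ in the defining sum) — so the sum over $\Z \setminus \{0\}$ becomes $2\sum_{m=1}^{\infty}$, yielding the stated formula. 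The constant $\e^{\pi\I/4}/(q\sqrt 2)$ should drop out of carefully tracking the normalization in the Gaussian integral $\int_\R e(u^2 z)\dif u = \e^{\pi\I/4}(2z)^{-1/2}$ (valid with the chosen branch).

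The main technical obstacle I anticipate is justifying the application of Poisson summation and the interchange of the $j$- and $m$-summations rigorously: the series $\sum_m S(q,p,m)\exp(-\I\pi m^2/(2q^2 z))$ converges because $\Re(-\I/z) = -\Im(z)/|z|^2 < 0$ for $y > 0$, but one must confirm the decay is fast enough and that the Poisson summation formula applies to the Gaussian $t \mapsto e((qt+j)^2 z)$, which is Schwartz-class on $\R$ when $\Im z > 0$, so this should go through cleanly. A secondary point requiring care is bookkeeping the branch of $z^{-1/2}$ consistently through the Gaussian integral evaluation so that the formula is literally correct (not merely up to sign) for all $z$ in the upper half-plane; the convention fixed in the paragraph preceding the lemma is designed precisely to handle this, so I would lean on it explicitly when evaluating $\int_\R e(au^2 + bu)\dif u$ for $\Re(-\I a) \ge 0$.
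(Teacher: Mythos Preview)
Your proposal is correct and follows essentially the same route as the paper: split $\theta(p/q+z)$ into residue classes modulo $q$, apply Poisson summation to the resulting Gaussian series (using $\int_{\R} e(t^{2}z)e(-ut)\dif t = \tfrac{\e^{\pi\I/4}}{\sqrt{2}}z^{-1/2}\exp(-\I\pi u^{2}/(2z))$), and then collect the $j$-sum into $S(q,p,m)$ and fold $m\in\Z$ into $m\ge0$ via $S(q,p,-m)=S(q,p,m)$. Your remarks on the Schwartz-class justification and the branch convention are exactly the points the paper handles implicitly.
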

\begin{proof}
Rearranging terms according to their value mod $q$, we write
\[ 
	\theta\biggl( \frac{p}{q} + z \biggr) = \sum_{n\in \Z}e\biggl(\frac{pn^{2}}{q}\biggr)e(n^{2}z) 
	= \sum_{j=1}^{q}e\biggl(\frac{pj^{2}}{q}\biggr)\sum_{n\in j+q\Z}e(n^{2}z) . 
\]
For fixed $z$, the function $f_{z}: \R \to \C$, $f_{z}:t \mapsto e(zt^{2})$ has Fourier transform

\[ 
	\hat{f_{z}}(u) = \int_{-\infty}^{\infty}f_{z}(t)\e^{-2\pi\I ut}\dif t = \frac{\e^{\pi\I/4}}{\sqrt{2}}z^{-1/2}\exp\biggl(-\frac{\I\pi u^{2}}{2 z} \biggr). 
\]
An application of the well-known Poisson summation formula then yields
\begin{align*}
\theta\biggl( \frac{p}{q} + z \biggr) 	&= \frac{\e^{\pi\I/4}}{q\sqrt{2}}z^{-1/2}\sum_{j=1}^{q}e\biggl(\frac{pj^{2}}{q}\biggr)\sum_{m\in \Z}e\biggl(\frac{mj}{q}\biggr)\exp\biggl(-\frac{\I \pi m^{2}}{2q^{2}z}\biggr)\\
							&=\frac{\e^{\pi\I/4}}{q\sqrt{2}}z^{-1/2}\sum_{m\in \Z}S(q,p,m)\exp\biggl(-\frac{\I \pi m^{2}}{2q^{2}z}\biggr)\\
							&=\frac{\e^{\pi\I/4}}{q\sqrt{2}}z^{-1/2}\Biggl( S(q,p) + 2\sum_{m=1}^{\infty}S(q,p,m)\exp\biggl( -\frac{\I \pi m^{2}}{2q^{2}z}\biggr)\Biggr).
\end{align*}
\end{proof}

 Define the ``twisted'' $\phi$-function  
\[ 
\phi_{q,p}(z) = \sum_{m=1}^{\infty}\frac{S(q,p,m)}{2\pi\I m^{2}}e(m^{2}z). 
\]
Lemma \ref{lem: theta in rationals} allows us to give a short proof of the following theorem, essentially due to Smith \cite{Smith} and Itatsu \cite{Itatsu} (cf.\ \cite{Duistermaat,HolschneiderTchamitchian}; a generalization of Theorem \ref{th: behavior phi at rationals} is given in \cite[Corollary~2.4]{ChamizoUbis2007}).
\begin{theorem}
\label{th: behavior phi at rationals}
Let $p$ and $q$ be integers, $q\ge 1$, $(p,q)=1$. Then 
\[
	\phi(p/q + h) = \phi(p/q) + C_{p/q}^{-}\abs{h}_{-}^{1/2} + C_{p/q}^{+}\abs{h}^{1/2}_{+} - h/2 + R_{q,p}(h),
\]
where $C_{p/q}^{\pm}$ are given by 
\begin{equation}
\label{eq: constants h^{1/2}}
C_{p/q}^{-} = \frac{\e^{3\pi\I/4}}{q\sqrt{2}}S(q,p) \quad \mbox{and} \quad C_{p/q}^{+} = \frac{\e^{\pi\I/4}}{q\sqrt{2}}S(q,p),
\end{equation}
and $R_{q,p}(h)$ satisfies the estimate $R_{q,p}(h) \ll q^{3/2}\abs{h}^{3/2}$. Furthermore, $C_{p/q}^{-} = C_{p/q}^{+} = 0$ (and hence $\phi$ is differentiable at $p/q$) if and only if $q \equiv 2 \Mod 4$. 
\end{theorem}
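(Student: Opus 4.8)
The plan is to insert the expansion of Lemma~\ref{lem: theta in rationals} into the contour identity~\eqref{R eq: key Cauchy} and to peel off the leading term. Since $\phi$ is $1$-periodic and the sums $S(q,p)$, $S(q,p,m)$ depend only on $p\bmod q$, I may assume $1\le p\le q$, so that Lemma~\ref{lem: theta in rationals} applies at $x=p/q$. Write $\theta(p/q+z)=A\,z^{-1/2}+B(z)$, where $A=\e^{\pi\I/4}S(q,p)/(q\sqrt2)$ and
\[
B(z)=\frac{\e^{\pi\I/4}\sqrt2}{q}\,z^{-1/2}\sum_{m=1}^{\infty}S(q,p,m)\exp\Bigl(-\frac{\I\pi m^{2}}{2q^{2}z}\Bigr).
\]
The contour $\Gamma$ meets the real axis only at its endpoints $0$ and $h$; since $z^{-1/2}$ is integrable near $0$ and, near $0$ (where $\Gamma$ runs along $\Re z=0$), the factor $\exp(-\I\pi m^{2}/(2q^{2}z))$ decays like $\e^{-\pi m^{2}/(2q^{2}\abs{z})}$, the integrals $\int_\Gamma z^{-1/2}\dif z$ and $\int_\Gamma B(z)\dif z$ converge absolutely. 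For the $A$-part, $2z^{1/2}$ is a primitive of $z^{-1/2}$ in the upper half-plane, continuous up to the real axis away from $0$, so $-\tfrac12 A\int_\Gamma z^{-1/2}\dif z$ is $-A$ times the difference of the boundary values of $z^{1/2}$ at the two endpoints of $\Gamma$; with the convention $t^{1/2}=\I\abs{t}^{1/2}$ for $t<0$ — the extra $\I$ occurring exactly when $h<0$ and turning $\e^{\pi\I/4}$ into $\e^{3\pi\I/4}$ — this evaluates to $C_{p/q}^{+}\abs{h}^{1/2}_{+}+C_{p/q}^{-}\abs{h}^{1/2}_{-}$ with the constants~\eqref{eq: constants h^{1/2}}, the only delicate point being the orientation of $\Gamma$.

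It then remains to estimate the error term $R_{q,p}(h)=-\tfrac12\int_\Gamma B(z)\,\dif z$, that is,
\[
R_{q,p}(h)=-\frac{\e^{\pi\I/4}}{q\sqrt2}\sum_{m=1}^{\infty}S(q,p,m)\int_\Gamma z^{-1/2}\exp\Bigl(-\frac{\I\pi m^{2}}{2q^{2}z}\Bigr)\dif z ,
\]
the interchange of $\sum_m$ and $\int_\Gamma$ being justified by noting that $\sum_m\lvert S(q,p,m)\rvert\,\abs{z}^{-1/2}\exp(-\pi m^{2}\Im z/(2q^{2}\abs{z}^{2}))$ is integrable over $\Gamma$ (the singularity of $z^{-1/2}$ at the endpoint $0$ is absorbed by the exponential, and $\lvert S(q,p,m)\rvert\ll\sqrt q$ by the step below). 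For a fixed $m$, I would set $a_m=\I\pi m^{2}/(2q^{2})$ and integrate by parts once via $z^{-1/2}\e^{-a_m/z}\,\dif z=a_m^{-1}z^{3/2}\,\dif(\e^{-a_m/z})$: the boundary contribution vanishes at $0$ and has modulus $\lvert a_m\rvert^{-1}\abs{h}^{3/2}$ at $h$ (since $a_m/h$ is purely imaginary, $\lvert\e^{-a_m/h}\rvert=1$), while the remaining term $-\tfrac32 a_m^{-1}\int_\Gamma z^{1/2}\e^{-a_m/z}\,\dif z$ has modulus $\ll\lvert a_m\rvert^{-1}\,(\text{length of }\Gamma)\,\max_\Gamma\abs{z^{1/2}}\ll\lvert a_m\rvert^{-1}\abs{h}^{3/2}$, using $\lvert\e^{-a_m/z}\rvert\le1$ on the closed upper half-plane and $\abs{z}\le\sqrt2\,\abs{h}$ on $\Gamma$. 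Hence each $m$-term of the series is $\ll(q^{2}/m^{2})\abs{h}^{3/2}$. Since $\lvert S(q,p,m)\rvert\le\sqrt2\,\sqrt q$ for all $m$ by Theorem~\ref{th: evaluation S(q,p)} together with \eqref{eq: relate S(q,p,m) to S(q,p) case 1}--\eqref{eq: relate S(q,p,m) to S(q,p) case 2}, summing over $m$ and taking the prefactor $1/(q\sqrt2)$ into account gives
\[
\lvert R_{q,p}(h)\rvert\ll\frac1q\cdot\sqrt q\cdot q^{2}\abs{h}^{3/2}\sum_{m=1}^{\infty}\frac1{m^{2}}\ll q^{3/2}\abs{h}^{3/2}.
\]
The main obstacle I anticipate is precisely this accounting of powers of $q$: the $q^{-1}$ in front has to cancel the $q^{2}$ produced by $a_m^{-1}$, leaving $\sqrt q\cdot q=q^{3/2}$; a secondary issue is the justification of the termwise integration at the endpoint $0$ of $\Gamma$.

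For the last assertion, $C_{p/q}^{-}=C_{p/q}^{+}=0$ is equivalent to $S(q,p)=0$, which by Theorem~\ref{th: evaluation S(q,p)} holds exactly when $q\equiv2\Mod4$. In that case $\phi(p/q+h)=\phi(p/q)-h/2+R_{q,p}(h)$ with $R_{q,p}(h)=O(\abs{h}^{3/2})=o(\abs{h})$ as $h\to0$, so $\phi$ is differentiable at $p/q$, with $\phi'(p/q)=-1/2$; conversely, if $q\not\equiv2\Mod4$ then $C_{p/q}^{\pm}\neq0$ and the term $C_{p/q}^{+}\abs{h}^{1/2}_{+}+C_{p/q}^{-}\abs{h}^{1/2}_{-}$, which is of exact order $\abs{h}^{1/2}$, makes the difference quotient $(\phi(p/q+h)-\phi(p/q))/h$ unbounded as $h\to0$, so $\phi$ is not differentiable at $p/q$.
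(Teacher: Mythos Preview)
Your approach is close in spirit to the paper's: both insert Lemma~\ref{lem: theta in rationals} and integrate by parts once to isolate the $\abs{h}^{1/2}$ term and bound the rest. The paper, however, stays on the horizontal segment $[\I y,\,h+\I y]$ of~\eqref{R eq: key} and packages the tail as the single function $\phi_{q,p}(w)=\sum_{m\ge1}S(q,p,m)(2\pi\I m^{2})^{-1}e(m^{2}w)$, whose continuous extension to~$\R$ makes both the integration by parts and the limit $y\to0^{+}$ routine and yields the explicit expression~\eqref{eq: evaluation R_{q,p}}. Your termwise treatment over the contour $\Gamma$ reaches the same bound and is a legitimate variant, but the justification you give for interchanging $\sum_{m}$ and $\int_{\Gamma}$ is not correct.

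The difficulty is not at $z=0$ (where, as you note, the Gaussian factor absorbs everything) but at the \emph{other} endpoint $z=h$. Along the vertical edge $z=h+\I t$ with $t\downarrow0$ one has $\Im z/\abs{z}^{2}\sim t/h^{2}\to0$, so each exponential tends to~$1$ and
\[
\sum_{m\ge1}\abs{S(q,p,m)}\,\abs{z}^{-1/2}\exp\Bigl(-\frac{\pi m^{2}\,\Im z}{2q^{2}\abs{z}^{2}}\Bigr)
\]
diverges (its general term does not even tend to $0$). Hence the Tonelli-type argument you invoke fails there; indeed $\theta(p/q+z)$ itself has no boundary limit as $z\to h$. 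The interchange is nonetheless valid, but it needs a different argument: either (i) sum first into $\phi_{q,p}$ as the paper does, since $\phi_{q,p}(-1/(4q^{2}\zeta))$ is bounded and continuous up to the real axis; or (ii) restrict to $\Gamma_{\eps}=\Gamma\cap\{\Im z\ge\eps\}$, where the series converges uniformly and your termwise integration by parts is legitimate, then let $\eps\to0$ using dominated convergence on the resulting sums, which now converge absolutely thanks to the gained factor $\abs{a_{m}}^{-1}\asymp q^{2}/m^{2}$. After this repair your power-counting and the remaining conclusions (including the differentiability characterization via Theorem~\ref{th: evaluation S(q,p)}) are correct.
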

\begin{proof}
Suppose $y>0$. By equation \eqref{R eq: key},
\[ 
	\phi\biggl(\frac{p}{q} + h+\I y\biggr) 	= \phi\biggl(\frac{p}{q} +\I y\biggr) +\frac{1}{2}\int_{\I y}^{h+\I y}\theta\biggl(\frac{p}{q}+\zeta\biggr) \dif \zeta - \frac{1}{2}h. 
\]
Using Lemma \ref{lem: theta in rationals} and integrating by parts, 
\begin{align*}
\int_{\I y}^{h+\I y}\theta\biggl(\frac{p}{q}+\zeta\biggr) \dif \zeta
&= \frac{\e^{\pi\I/4}}{q\sqrt{2}}\Biggl(S(q,p)\Bigl[2\zeta^{1/2}\Bigr]_{\I y}^{h+\I y} + 
		2 \int_{\I y}^{h+\I y}\zeta^{-1/2}(4q^{2}\zeta^{2})\biggl(\phi_{q,p}\biggl(-\frac{1}{4q^{2}\zeta}\biggr)\biggr)' \dif \zeta\Biggr) \\
&= \frac{2\e^{\pi\I/4}}{q\sqrt{2}}\Biggl(S(q,p)\Bigl[\zeta^{1/2}\Bigr]_{\I y}^{h+\I y} + 
		\biggl[4q^{2}\zeta^{3/2}\phi_{q,p}\biggl(-\frac{1}{4q^{2}\zeta}\biggr)\biggr]_{\I y}^{h+\I y}\\
&\phantom{=\frac{2\e^{\pi\I/4}}{q\sqrt{2}}}\qquad  {} - 6q^{2} \int_{\I y}^{h+ \I y}\zeta^{1/2}\phi_{q,p}\biggl(-\frac{1}{4q^{2}\zeta}\biggr) \dif \zeta \Biggr).
\end{align*}
All the occurring functions have continuous extensions to $\R$. Letting $y\to0^{+}$ we obtain the desired result, with
the constants  $C_{p/q}^{\pm}$ as in 
\eqref{eq: constants h^{1/2}} 
and with 
\begin{equation}
\label{eq: evaluation R_{q,p}}
R_{q,p}(h) = -4q\frac{\e^{-3\pi\I\sgn h/4}}{\sqrt{2}} \phi_{q,p}\biggl(-\frac{1}{4q^{2}h}\biggr)\abs{h}^{3/2} - 6q\frac{\e^{\pi\I/4}}{\sqrt{2}}\int_{0}^{h}t^{1/2}\phi_{q,p}\biggl(-\frac{1}{4q^{2}t}\biggr)\dif t.
\end{equation}
The fact that the coefficients $C_{p/q}^{\pm}$ are both zero if and only if $q \equiv 2 \Mod 4$ is an immediate consequence of Theorem \ref{th: evaluation S(q,p)}. The bound $R_{q,p}(h) \ll q^{3/2}\abs{h}^{3/2}$ easily follows since $\phi_{q,p} \ll \sqrt{q}$, in view of \eqref{eq: relate S(q,p,m) to S(q,p) case 1}, \eqref{eq: relate S(q,p,m) to S(q,p) case 2}, and Theorem \ref{th: evaluation S(q,p)}. 
\end{proof}

Iterating the integration by parts procedure, we obtain a full asymptotic series for the remainder $R_{q,p}$. Indeed, for any $K\in \N$,
\begin{multline*}
	 R_{q,p}(h) = -\frac{\e^{-3\pi\I\sgn h/4}}{\sqrt{2}}\sum_{k=0}^{K}a_{k}q^{2k+1}\phi_{q,p}^{(-k)}\biggl(-\frac{1}{4q^{2}h}\biggr)\e^{k\pi\I(1-\sgn h)/2}\abs{h}^{k+3/2} \\
	 - \frac{\e^{\pi\I/4}}{\sqrt{2}}(K+3/2)a_{K}q^{2K+1}\int_{0}^{h}t^{K+1/2}\phi_{q,p}^{(-K)}\biggl(-\frac{1}{4q^{2}t}\biggr)\dif t,
\end{multline*}
where\footnote{For $k=0$, the product equals $1$ in accordance with the empty product convention.} 
\[
	a_{k} = (-1)^{k}4^{k+1}\prod_{j=1}^{k}(j+1/2)
\]
and $\phi_{q,p}^{(-k)}$ stands for the $\kth$-order primitive 
\[
	\phi_{q,p}^{(-k)}(x) = \sum_{m=1}^{\infty}\frac{S(q,p,m)}{(2\pi\I m^{2})^{k+1}}e(m^{2}x).
\]
A similar asymptotic series was obtained by Duistermaat in \cite{Duistermaat}.

Inspecting the $\kth$ term in this asymptotic series, we see that it is of the form $\abs{h}^{3/2+k}g_{k}^{\pm}(\,\abs{h}^{-1})$, where $\pm=\sgn h$ and where the functions $g_{k}^{\pm}$ are  $4q^{2}$-periodic with zero mean and global H\"older regularity $1/2+k$. One readily verifies that $R_{q,p}$ is a so-called \emph{trigonometric chirp} at $0$ of type $(3/2, 1)$ and of regularity $1/2$. The latter refines a theorem of Jaffard and Meyer \cite[Theorem~7.1]{JaffardMeyerbook} for Riemann's function; see \cite[p.~73]{JaffardMeyerbook} for the precise definition of a trigonometric chirp. The prototypical example of a trigonometric chirp at $0$ of type $(\alpha, \beta)$, $\alpha>-1$, $\beta>0$ is the function $\abs{x}^{\alpha}\sin(\,\abs{x}^{-\beta})$. 

Using the explicit expression for $S(q,p)$ given by Theorem \ref{th: evaluation S(q,p)}, we can exhibit the behavior of $\Re \bigl(\phi(p/q+h) - \phi(p/q)\bigr)$ in a precise fashion, which we summarize in Table \ref{tab: behavior Re(phi(p/q+h)-phi(p/q))}. Note that at some rational points the function $\Re \phi$ has a (finite) left (resp.\ right) derivative, but an infinite right (resp.\ left) derivative. The table displays the behavior of the $1$-periodic  function $\Re\bigl(\phi(p/q+h)-\phi(p/q)\bigr)$. By rescaling the function $\Re \phi $ by a factor of $1/2$, we obtain the well known regularity of the $2$-periodic Riemann's function $f$ \eqref{Riemann's function} at rational points.

\begin{corollary}
\label{cor: differentiability Riemann function}
Suppose $r=p/q$ is rational. If $p$ and $q$ are both odd, then $f$ is differentiable at $r$ and its H\"{o}lder exponent at $r$ is $3/2$; otherwise the H\"older exponent of $f$ at $r$ equals $1/2$.
\end{corollary}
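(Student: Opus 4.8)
\medskip
\noindent\emph{Proof proposal.}
The plan is to reduce the statement to Theorem~\ref{th: behavior phi at rationals} applied to the real part of $\phi$. First I would record the elementary identity obtained by taking real parts term by term in the (absolutely convergent) series for $\phi$ on the real line,
\[
	f(x) = 2\pi\,\Re\phi(x/2), \qquad x\in\R,
\]
which holds because $\sin\bigl(2\pi n^{2}(x/2)\bigr) = \sin(\pi n^{2}x)$. Since multiplication by the nonzero constant $2\pi$ and the dilation $x\mapsto x/2$ preserve both differentiability and the pointwise H\"older exponent, it suffices to determine the regularity of $\Re\phi$ at $r/2$, where $r=p/q$ is the given rational in lowest terms.

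The next step is to bring $r/2$ into lowest terms, say $r/2=p'/q'$, which requires only a parity discussion (recall that, since $(p,q)=1$, the integers $p$ and $q$ are not both even). If $p$ and $q$ are both odd, then $(p,2q)=1$, so $q'=2q\equiv 2\Mod 4$. If $p$ is even (hence $q$ odd), then $p'=p/2$ and $q'=q$, which is odd. If $p$ is odd and $q$ is even, then $(p,2q)=1$ and $q'=2q$, which is divisible by $4$. Thus $q'\equiv 2\Mod 4$ if and only if $p$ and $q$ are both odd.

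Now I would apply Theorem~\ref{th: behavior phi at rationals} at $p'/q'$. From \eqref{eq: constants h^{1/2}} one has $C_{p'/q'}^{-}=\I\,C_{p'/q'}^{+}$, so $\Re C_{p'/q'}^{+}$ and $\Re C_{p'/q'}^{-}$ vanish simultaneously, namely exactly when $C_{p'/q'}^{+}=0$, which by Theorem~\ref{th: behavior phi at rationals} occurs precisely when $q'\equiv 2\Mod 4$. When $p,q$ are both odd we are in this case, and taking real parts in Theorem~\ref{th: behavior phi at rationals} gives
\[
	\Re\phi(p'/q'+h)-\Re\phi(p'/q') = -h/2 + O(\abs{h}^{3/2}),
\]
so $\Re\phi$ is differentiable at $p'/q'$, and therefore $f$ is differentiable at $r$. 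Otherwise $q'\not\equiv 2\Mod 4$, hence at least one of $\Re C_{p'/q'}^{+}$, $\Re C_{p'/q'}^{-}$ is nonzero; say $\Re C_{p'/q'}^{+}\neq0$, the other case being symmetric. Taking real parts in Theorem~\ref{th: behavior phi at rationals} and using $R_{q',p'}(h)\ll\abs{h}^{3/2}$, we obtain, as $h\to 0^{+}$,
\[
	\Re\phi(p'/q'+h)-\Re\phi(p'/q') = (\Re C_{p'/q'}^{+})\,h^{1/2} + O(h).
\]
This forces the pointwise H\"older exponent of $\Re\phi$ at $p'/q'$ to be $\le 1/2$ (the $h^{1/2}$-term is not $O(h^{\alpha})$ for any $\alpha>1/2$) and $\ge 1/2$ (every term on the right is $O(\abs{h}^{1/2})$); hence it equals $1/2$, and so does the H\"older exponent of $f$ at $r$.

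I expect the only delicate point to be the bookkeeping: correctly identifying the reduced denominator $q'$ after halving $r$, and then extracting the \emph{exact} value $1/2$ from Theorem~\ref{th: behavior phi at rationals}. The upper bound $\alpha\le 1/2$ rests on the non-vanishing of $\Re C_{p'/q'}^{\pm}$ --- this is where the relation $C_{p'/q'}^{-}=\I\,C_{p'/q'}^{+}$ together with the evaluation of the Gauss sum $S(q',p')$ (Theorem~\ref{th: evaluation S(q,p)}) is used --- while the lower bound $\alpha\ge 1/2$ uses the $O(\abs{h}^{3/2})$ control on the remainder. Alternatively, after the change of variables one could read both conclusions directly off the table of values of $\Re\bigl(\phi(p/q+h)-\phi(p/q)\bigr)$ established above.
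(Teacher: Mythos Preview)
Your argument is correct and matches the paper's approach: the paper simply states that the corollary follows from the analysis of $\Re\phi$ in Theorem~\ref{th: behavior phi at rationals} (summarized in Table~\ref{tab: behavior Re(phi(p/q+h)-phi(p/q))}) ``by rescaling by a factor $1/2$,'' and you have spelled out precisely that rescaling via $f(x)=2\pi\,\Re\phi(x/2)$ together with the parity bookkeeping needed to identify the reduced denominator of $r/2$.
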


\begin{table}
\caption{Behavior of $\Re \bigl(\phi(p/q+h) - \phi(p/q)\bigr)= \sum_{n=1}^{\infty}\frac{\sin(2\pi n^{2}x)}{2\pi n^{2}}$}
\label{tab: behavior Re(phi(p/q+h)-phi(p/q))}
\setlength{\extrarowheight}{5mm}
\begin{center}
\begin{tabular}{>{$}c<{$} >{$}c<{$} >{$\displaystyle}c<{$} >{$\displaystyle}c<{$}}
	\toprule
	q\Mod 4			&p\Mod4				&h<0					&h>0 \\
	\midrule
	1		&\text{any}		&-\jac{p}{q}\frac{1}{2\sqrt{q}}\sqrt{\,\abs{h}} + O_{q}\bigl(\,\abs{h}\bigr)	&\jac{p}{q}\frac{1}{2\sqrt{q}}\sqrt{h} + O_{q}(h)\\
	3		&\text{any}		&-\jac{p}{q}\frac{1}{2\sqrt{q}}\sqrt{\,\abs{h}} + O_{q}\bigl(\,\abs{h}\bigr)	&-\jac{p}{q}\frac{1}{2\sqrt{q}}\sqrt{h} + O_{q}(h)\\
	2		&\text{any}		& -\frac{1}{2}h+O\bigl(q^{3/2}\abs{h}^{3/2}\bigr)					&-\frac{1}{2}h+ O\bigl(q^{3/2}h^{3/2}\bigr) \\
	0 		&1				& -\jac{q}{p}\frac{1}{\sqrt{q}}\sqrt{\,\abs{h}} + O_{q}\bigl(\,\abs{h}\bigr)	&-\frac{1}{2}h + O\bigl(q^{3/2}h^{3/2}\bigr) \\
	0		&3				& -\frac{1}{2}h + O\bigl(q^{3/2}\abs{h}^{3/2}\bigr)				& \jac{q}{p}\frac{1}{\sqrt{q}}\sqrt{h} + O_{q}(h)\\

	\bottomrule
\end{tabular}
\end{center}
\end{table}

\section{Behavior at irrational points}
\label{sec: behavior at irrational points}

We now investigate the behavior of $\phi$ at irrational points $\rho$. Unlike in the rational case, we will not be able to derive an asymptotic formula for $\phi$ near $\rho$. Instead, we will determine the H\"older exponent $\alpha(\rho)$ introduced in \eqref{R eq: holder exponent}.

We need some preparation in order to state the formula for $\alpha(\rho)$. Denote the $\nth$ convergent in the continued fraction expansion of $\rho$ by $r_{n}=p_{n}/q_{n}$, where $(p_{n},q_{n})=1$. The quality of the approximation of $\rho$ by $r_{n}$ is quantified by the number $\tau_{n}$, which is defined via the relation
\[ 
	\abs{\rho-r_{n}} = \biggl(\frac{1}{q_{n}}\biggr)^{\tau_{n}}. 
\] 
Let $(r_{n_{k}})_{k}$ be the subsequence\footnote{Using a basic property of continued fractions (see \eqref{eq: det continued fractions} below), it is readily seen that this is an infinite subsequence.} of convergents $r_{n_{k}}$ with $q_{n_{k}} \not\equiv 2 \Mod 4$, and set 
\begin{equation}
\label{eq: def tau}
\tau(\rho) \coloneqq \limsup_{k\to\infty} \tau_{n_{k}}.
\end{equation}

\begin{theorem}
\label{th: evaluation alpha(rho)}
Let $\rho$ be irrational. The H\"older exponent $\alpha(\rho)$ of $\phi$ at $\rho$ is given by 
\begin{equation}
\label{eq: evaluation alpha(rho)}
	\alpha(\rho) = \frac{1}{2} + \frac{1}{2\tau(\rho)}.
\end{equation}
The same result also holds for the H\"older exponent at $\rho$ of $\Re\phi$ and $\Im\phi$.
\end{theorem}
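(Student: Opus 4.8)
The plan is to establish \eqref{eq: evaluation alpha(rho)} by proving the two inequalities $\alpha(\rho) \le \frac12 + \frac{1}{2\tau(\rho)}$ and $\alpha(\rho) \ge \frac12 + \frac{1}{2\tau(\rho)}$ separately, and then deduce the statement for $\Re\phi$ and $\Im\phi$ at the end.

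\emph{Upper bound.} For this direction I would use the asymptotic formula of Theorem \ref{th: behavior phi at rationals} at the convergents $r_{n_k} = p_{n_k}/q_{n_k}$ with $q_{n_k}\not\equiv 2\Mod 4$. At such a rational point the leading coefficient satisfies $|C^{\pm}_{r_{n_k}}| \asymp q_{n_k}^{-1/2}$ by Theorem \ref{th: evaluation S(q,p)}, so $\phi$ has a genuine $|h|^{1/2}$-type singularity at $r_{n_k}$ of size $\asymp q_{n_k}^{-1/2}|h|^{1/2}$, while the remainder is $O(q_{n_k}^{3/2}|h|^{3/2})$. Taking $h = \rho - r_{n_k}$, so that $|h| = q_{n_k}^{-\tau_{n_k}}$, a comparison of $\phi(\rho) - \phi(r_{n_k})$ with the behavior near $r_{n_k}$ — combined with the triangle inequality and the hypothetical Hölder bound $\phi(\rho + h) - \phi(\rho) = O(|h|^\alpha)$ at $\rho$ itself — forces $\alpha$ to be controlled by the exponent that balances $q_{n_k}^{-1/2}|h|^{1/2}$ against the error, i.e.\ $\alpha \le \frac12 + \frac{1}{2\tau_{n_k}} + o(1)$; taking $\limsup$ over $k$ gives the bound. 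One technical point is that one must evaluate $\phi$ at the two nearby points $\rho$ and $r_{n_k}$ and at a third point like $2\rho - r_{n_k}$ (reflecting across $\rho$) to isolate the non-smooth part; this is the standard ``two-microlocal'' style argument and I would carry it out carefully but it is routine.

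\emph{Lower bound.} This is where \eqref{R eq: key Cauchy} and Lemma \ref{lem: theta in rationals} do the real work, and I expect it to be the main obstacle. Fix $x = \rho$ and small $h$; choose the convergent $r_n = p_n/q_n$ of $\rho$ whose denominator $q_n$ is calibrated to $|h|$ (roughly $q_n \asymp |h|^{-1/2}$, or more precisely chosen so that both $q_n^{-2}$ and $|\rho - r_n|$ are comparable to or smaller than $|h|$). Write $\theta(\rho + z) = \theta(r_n + (z + (\rho - r_n)))$ and insert the exact expression from Lemma \ref{lem: theta in rationals}. On the contour $\Gamma$, where $|z| \le \sqrt 2 |h|$ and $\Im z$ ranges up to $|h|$, the shift $w := z + (\rho - r_n)$ has $|w| \lesssim |h|$ and the factor $\exp(-\I\pi m^2/(2 q_n^2 w))$ is controlled; using $|S(q_n, p_n, m)| \ll \sqrt{q_n}$ (from \eqref{eq: relate S(q,p,m) to S(q,p) case 1}, \eqref{eq: relate S(q,p,m) to S(q,p) case 2}, Theorem \ref{th: evaluation S(q,p)}) one obtains a bound $|\theta(\rho + z)| \ll q_n^{-1} |w|^{-1/2} \cdot (\text{something summable in } m)$ on $\Gamma$, which after integrating over the contour of length $\asymp |h|$ yields $|\phi(\rho + h) - \phi(\rho) + h/2| \ll q_n^{-1} |h|^{1/2} + \text{(lower-order)}$. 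Then translating $q_n^{-1}$ back into a power of $|h|$ via the continued-fraction relation $|h| \asymp q_n^{-\tau_n}$ (so $q_n^{-1} \asymp |h|^{1/\tau_n}$, hence $q_n^{-1}|h|^{1/2} \asymp |h|^{1/2 + 1/\tau_n}$, and by optimizing the choice of $n$ over all valid convergents this becomes $|h|^{1/2 + 1/(2\tau(\rho)) - o(1)}$) gives the desired Hölder bound. The delicate parts are: (i) the bookkeeping of exactly which convergent to pick for a given $h$, requiring the elementary estimates $q_n|\rho - r_n| \asymp q_{n+1}^{-1}$ and $q_{n+1} \asymp 1/|\rho - r_n|$ (the determinant identity \eqref{eq: det continued fractions} the paper alludes to); (ii) ensuring that one can restrict to convergents with $q_n \not\equiv 2 \Mod 4$, which is exactly why $\tau(\rho)$ was defined via that subsequence, and checking that consecutive such convergents do not have denominators too far apart; and (iii) handling the linear term $-h/2$, which is smooth and contributes $O(|h|)$, hence is harmless as long as $\frac12 + \frac{1}{2\tau(\rho)} \le 1$, which always holds since $\tau(\rho) \ge 1$.

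\emph{Real and imaginary parts.} Finally, since $\phi(\rho + h) - \phi(\rho) = O(|h|^\alpha)$ trivially implies the same for $\Re\phi$ and $\Im\phi$, the lower bound transfers immediately. For the upper bound one notes that the obstruction produced at the convergents comes from the leading term $C^{\pm}_{r_{n_k}} |h|^{1/2}$, and by Theorem \ref{th: evaluation S(q,p)} the Gauss sum $S(q_{n_k}, p_{n_k})$ is, up to the unimodular factor $\eps_{q}$ or $(1+\I)\overline{\eps_p}$, a nonzero real multiple of $\sqrt{q_{n_k}}$; hence at least one of $\Re C^{\pm}$, $\Im C^{\pm}$ is $\asymp q_{n_k}^{-1/2}$ — in fact Table \ref{tab: behavior Re(phi(p/q+h)-phi(p/q))} already records that $\Re\phi$ has a $|h|^{1/2}$ singularity on the relevant side for every $q\not\equiv 2\Mod 4$ — so the same upper-bound argument applies verbatim to $\Re\phi$ and (by an analogous inspection of the imaginary part, or by applying the $\Re$ result to $\I\phi$ composed with a rotation) to $\Im\phi$. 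This completes the proof.
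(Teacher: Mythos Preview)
Your overall plan matches the paper's: split into $\ge$ and $\le$, use Theorem~\ref{th: behavior phi at rationals} at convergents with $q\not\equiv 2\Mod 4$ for the upper bound, and combine Lemma~\ref{lem: theta in rationals} with the contour identity \eqref{R eq: key Cauchy} for the lower bound. The treatment of $\Re\phi$ and $\Im\phi$ is also essentially the paper's. But both halves of your sketch contain genuine gaps.

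\textbf{Lower bound.} Your assertion that on $\Gamma$ the factors $\exp(-\I\pi m^{2}/(2q_n^{2}w))$ are ``controlled'' and give ``something summable in $m$'' is false on the vertical legs of $\Gamma$: there $\Im w\to 0$, each factor tends to $1$ in modulus, and the $m$-sum diverges. What one actually obtains from Lemma~\ref{lem: theta in rationals} together with \eqref{eq: elementary estimate theta} is the \emph{two-term} bound
\[
\theta\Bigl(\frac{p}{q}+\zeta\Bigr) \ll \frac{|S(q,p)|}{q\,|\zeta|^{1/2}} + \frac{\sqrt{q}\,|\zeta|^{1/2}}{(\Im\zeta)^{1/2}},
\]
and the second term cannot be dropped. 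Also, your leading factor should be $q_n^{-1/2}$, not $q_n^{-1}$, since $|S(q,p)|\asymp\sqrt{q}$ when $q\not\equiv 2\Mod 4$; this is why your conversion produced $1/\tau_n$ and then required an unexplained jump to $1/(2\tau(\rho))$. More seriously, with a \emph{single} convergent calibrated by $q_n\asymp|h|^{-1/2}$, both terms integrated over $\Gamma$ give only $|h|^{3/4}$, which is the right answer solely when $\tau(\rho)=2$. The paper instead proves a pointwise bound (Proposition~\ref{prop: bound theta near rho}) where, for each $z$, one picks $n=n(z)$ via $2|\rho-r_{n+1}|<|z|\le 2|\rho-r_n|$ and approximates by $r_{n+1}$ (switching to $r_n$ when $q_n\equiv 2\Mod 4$, exploiting $S(q_n,p_n)=0$); along $I_1$ this $n$ varies with $y$, and that is precisely what yields the exponent $\tfrac{1}{2}+\tfrac{1}{2\tau(\rho)}$ rather than $\tfrac{3}{4}$.

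\textbf{Upper bound.} Your proposed third point $2\rho-r_{n_k}$ does not obviously work. Writing $h=2(\rho-r_{n_k})$ in Theorem~\ref{th: behavior phi at rationals}, the ratio of the remainder $Cq^{3/2}|h|^{3/2}$ to the main term $q^{-1/2}|h|^{1/2}$ is $Cq^{2}|h|=2Cq^{2-\tau}$, which need not be small when $\tau$ is close to $2$ (and $\tau(\rho)=2$ is the generic case). The paper, following Duistermaat, compares $\phi(r_l)$ with $\phi(r_l+x_l)$ where $x_l=\lambda|\rho-r_l|$ for a \emph{fixed small} $\lambda<1/(\sqrt{2}\,C)$; the extra factor $\lambda$ is exactly what forces the remainder to be uniformly dominated by the $|h|^{1/2}$ term, after which the triangle inequality gives a bad point $h_l\in\{r_l-\rho,\,r_l+x_l-\rho\}$. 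Your ``routine'' here hides a step that fails as stated.
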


The rest of this section is devoted to the proof of Theorem \ref{th: evaluation alpha(rho)}, which consists of two parts, namely, establishing the two inequalities $\ge$ and $\le$ in \eqref{eq: evaluation alpha(rho)}. Let us first recall some basic properties of continued fractions (we refer to \cite{Schmidt} for proofs and more advanced properties). 
The continued fractions have the following properties: for every $n\in \N$, $\tau_{n}> 2$, consecutive convergents $r_{n}$ and $r_{n+1}$ lie on different sides of $\rho$, $\abs{\rho-r_{n+1}}<\abs{\rho-r_{n}}$, and 
\begin{equation}
\label{eq: det continued fractions}
p_{n+1}q_{n}-p_{n}q_{n+1} = (-1)^{n}.
\end{equation}
Since $\abs{r_{n}-r_{n+1}} = 1/(q_{n}q_{n+1})$, we have
\[ \abs{\rho-r_{n}} \leq \frac{1}{q_{n}q_{n+1}} \leq 2\abs{\rho-r_{n}}, \]
so 
\begin{equation}
\label{eq: relation q_{n} and q_{n+1}}
\biggl(\frac{1}{q_{n}}\biggr)^{\tau_{n}-1}\leq \frac{1}{q_{n+1}} \leq 2\biggl(\frac{1}{q_{n}}\biggr)^{\tau_{n}-1}.
\end{equation}

Note also that in view of \eqref{eq: det continued fractions}, we have that $q_{n}$ and $q_{n+1}$ are never both congruent to $2$ modulo 4.

\subsection{The lower bound for $\alpha(\rho)$}
\label{subsection lower bound}

 The lower bound for $\alpha(\rho)$ was first found by Jaffard \cite{Jaffard} by means of Tauberian arguments involving the continuous wavelet transform. To estimate the wavelet transform, Jaffard deduced bounds\footnote{Recently, Pastor has extended such bounds to modular forms, see Lemma \ref{lem: Pastor 3.3} below.} for the theta function near the irrational number $\rho$. We will present a simple proof of these bounds, using Lemma \ref{lem: theta in rationals}. Furthermore, we will show how these bounds directly furnish the lower bound for $\alpha(\rho)$, without needing to pass through the wavelet transform.

Comparing the sum with an integral, we immediately obtain the following estimate: \begin{equation}
\label{eq: elementary estimate theta}
	\sum_{n=1}^{\infty}e(n^{2}z) \ll y^{-1/2}
\end{equation}
for $z = x + \I y$ with $y>0$. Here and below $\ll$ stands for Vinogradov's notation.
\begin{proposition}
\label{prop: bound theta near rho}
Let $z= x+\I y$ with $y>0$. For each $\eps>0$ there exists a $\delta = \delta(\rho, \eps)>0$ such that for $\abs{z} < \delta$, $y>0$ the following bound holds:
\begin{equation}
\label{eq: estimate theta near rho}
	\theta(\rho + z) \ll \abs{z}^{\frac{1}{2\tau(\rho)}-\eps-\frac{1}{2}} + y^{-1/2}\abs{z}^{\frac{1}{2\tau(\rho)}-\eps}.
\end{equation}
\end{proposition}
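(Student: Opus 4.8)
The plan is to compare $\rho$ with a well-chosen convergent and feed this into Lemma~\ref{lem: theta in rationals}. First, two reductions. By \eqref{eq: elementary estimate theta}, $\theta(\rho+z)\ll y^{-1/2}$, and since $y\le\abs{z}$ the term $y^{-1/2}\abs{z}^{\frac{1}{2\tau(\rho)}-\eps}$ dominates $\abs{z}^{\frac{1}{2\tau(\rho)}-\eps-\frac{1}{2}}$ in \eqref{eq: estimate theta near rho}; hence we may assume $0<\eps<\frac{1}{2\tau(\rho)}$ (otherwise both exponents are $\le0$ and the bound is trivial), set $\beta\coloneqq\frac{1}{2\tau(\rho)}-\eps\in(0,\frac{1}{4})$, and prove the equivalent estimate $\theta(\rho+z)\ll y^{-1/2}\abs{z}^{\beta}$. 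Fix $\eps'>0$ with $2\beta(\tau(\rho)+\eps')<1$, possible because $2\beta\tau(\rho)=1-2\eps\tau(\rho)<1$. Now, for $z=x+\I y$ with $\abs{z}<\delta$, let $n$ be the largest index with $q_{n}\le\abs{z}^{-1/2}$; then $q_{n+1}>\abs{z}^{-1/2}$ and $n\to\infty$ as $\abs{z}\to0$. Put $w=z+(\rho-r_{n})$, so $\Im w=y>0$, and apply Lemma~\ref{lem: theta in rationals} with $p/q=r_{n}$ and complex variable $w$ (legitimate since $\theta$ is $1$-periodic and $S(q,p),S(q,p,m)$ depend on $p$ only modulo $q$, so we may take $1\le p_{n}\le q_{n}$). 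This writes $\theta(\rho+z)=\theta(r_{n}+w)$ as a \emph{main term} $\frac{\e^{\pi\I/4}}{q_{n}\sqrt{2}}w^{-1/2}S(q_{n},p_{n})$ plus a \emph{tail}.

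The tail is controlled unconditionally. Using $\abs{S(q_{n},p_{n},m)}\ll\sqrt{q_{n}}$ (by Section~\ref{sec: preliminaries} and Theorem~\ref{th: evaluation S(q,p)}), the identity $\abs{\exp(-\I\pi m^{2}/(2q_{n}^{2}w))}=\exp(-\pi m^{2}y/(2q_{n}^{2}\abs{w}^{2}))$, and the elementary bound $\sum_{m\ge1}\e^{-Am^{2}}\le\frac{\sqrt{\pi}}{2}A^{-1/2}$ (comparison with an integral, cf. \eqref{eq: elementary estimate theta}) with $A=\pi y/(2q_{n}^{2}\abs{w}^{2})$, one finds that the tail contributes $\ll q_{n}^{1/2}\abs{w}^{1/2}y^{-1/2}$ to $\theta(r_{n}+w)$. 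Since $\abs{w}\le\abs{z}+\abs{\rho-r_{n}}$, while $q_{n}\abs{z}\le\abs{z}^{1/2}$ (from $q_{n}\le\abs{z}^{-1/2}$) and $q_{n}\abs{\rho-r_{n}}\le 1/q_{n+1}<\abs{z}^{1/2}$ (from $\abs{\rho-r_{n}}\le 1/(q_{n}q_{n+1})$ and $q_{n+1}>\abs{z}^{-1/2}$), we get $q_{n}^{1/2}\abs{w}^{1/2}\ll\abs{z}^{1/4}$, so the tail is $\ll\abs{z}^{1/4}y^{-1/2}\ll y^{-1/2}\abs{z}^{\beta}$ because $\beta<\frac{1}{4}$.

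The main term has modulus $\ll q_{n}^{-1/2}\abs{w}^{-1/2}$, and here $\tau(\rho)$ enters. It vanishes when $q_{n}\equiv2\Mod4$ (Theorem~\ref{th: evaluation S(q,p)}), so assume $q_{n}\not\equiv2\Mod4$; then $n=n_{k}$ for some $k$, and since $n\to\infty$ we may assume $\tau_{n}<\tau(\rho)+\eps'$. If $q_{n}\ge\abs{z}^{-2\beta}$, then $\abs{w}\ge y$ bounds the main term by $\ll q_{n}^{-1/2}y^{-1/2}\le\abs{z}^{\beta}y^{-1/2}$. If instead $q_{n}<\abs{z}^{-2\beta}$, then $\abs{\rho-r_{n}}=q_{n}^{-\tau_{n}}>\abs{z}^{2\beta\tau_{n}}\ge2\abs{z}$ for $\delta$ small enough (using $2\beta\tau_{n}<2\beta(\tau(\rho)+\eps')<1$); since $\abs{x}\le\abs{z}\le\frac{1}{2}\abs{\rho-r_{n}}$, this forces $\abs{\Re w}=\abs{x+(\rho-r_{n})}\ge\frac{1}{2}\abs{\rho-r_{n}}$, hence $\abs{w}\asymp\abs{\rho-r_{n}}=q_{n}^{-\tau_{n}}$ and the main term is $\ll q_{n}^{(\tau_{n}-1)/2}$. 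Finally $q_{n}^{\tau_{n}-1}y<\abs{z}^{-2\beta(\tau_{n}-1)}\abs{z}=\abs{z}^{1-2\beta(\tau_{n}-1)}\le\abs{z}^{2\beta}$, because $1-2\beta(\tau_{n}-1)=(1-2\beta\tau_{n})+2\beta>2\beta$, whence $q_{n}^{(\tau_{n}-1)/2}<\abs{z}^{\beta}y^{-1/2}$. In every case the main term is $\ll y^{-1/2}\abs{z}^{\beta}$; together with the tail bound this gives $\theta(\rho+z)\ll y^{-1/2}\abs{z}^{\beta}$, as required.

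The genuinely delicate step is the last case of the main term. An abnormally small denominator $q_{n}$ at the scale $\abs{z}^{-1/2}$ means exactly that $\rho$ is very well approximated by $r_{n}$, so $q_{n}^{-1/2}$ cannot be bounded crudely; the compensation is that then $w$ is pushed away from the real axis, $\abs{w}\asymp\abs{\rho-r_{n}}\gg\abs{z}$, and the size of that gain in $\abs{w}^{-1/2}$ is exactly calibrated by $\tau_{n}<\tau(\rho)+\eps'$ — an inequality available only because $q_{n}\not\equiv2\Mod4$, i.e. only along the subsequence $(n_{k})$ that defines $\tau(\rho)$. All the remaining estimates are routine.
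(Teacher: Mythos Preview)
Your proof is correct, and it takes a genuinely different route from the paper. The paper selects the index $n$ by the condition $2\abs{\rho-r_{n+1}}<\abs{z}\le 2\abs{\rho-r_{n}}$ and approximates $\rho$ by $r_{n+1}$, so that the shifted variable $\zeta=z+(\rho-r_{n+1})$ satisfies $\abs{\zeta}\asymp\abs{z}$; this yields the two-term bound \eqref{eq: estimate theta near rho} directly (the main term gives $\abs{z}^{\frac{1}{2\tau(\rho)}-\eps-\frac{1}{2}}$, the tail gives $y^{-1/2}\abs{z}^{\frac{1}{2\tau(\rho)}-\eps}$), with a separate argument when $q_{n}\equiv 2\Mod 4$ that switches to the approximant $r_{n}$. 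You instead pick $n$ by the denominator condition $q_{n}\le\abs{z}^{-1/2}<q_{n+1}$ and approximate by $r_{n}$, after first observing that the two-term bound collapses to the single estimate $\theta(\rho+z)\ll y^{-1/2}\abs{z}^{\beta}$ since $y\le\abs{z}$. Your tail then gives the uniform $\abs{z}^{1/4}y^{-1/2}$, independent of $\tau(\rho)$, and the main term is handled by a dichotomy on whether $q_{n}$ lies above or below $\abs{z}^{-2\beta}$. A pleasant by-product is that the case $q_{n}\equiv 2\Mod 4$ becomes trivial for you (the main term simply vanishes and the tail bound suffices), whereas the paper needs a dedicated computation there. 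Conversely, the paper's choice keeps $\abs{\zeta}\asymp\abs{z}$ throughout, which makes the bookkeeping a little more transparent and delivers the first term $\abs{z}^{\frac{1}{2\tau(\rho)}-\eps-\frac{1}{2}}$ without passing through $y^{-1/2}$; your reduction shows this first term is redundant anyway.
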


\begin{proof}
We first derive bounds for $\theta$ near a rational $p/q$ with $(p,q)=1$. By Lemma \ref{lem: theta in rationals},
\[ 
	\theta\biggl(\frac{p}{q} +\zeta\biggr) = \frac{e^{\pi\I/4}}{q\sqrt{2}}\zeta^{-1/2}\sum_{m\in\Z}S(q,p,m)\exp\biggl(-\frac{\pi\I}{2q^{2}\zeta}m^{2}\biggr). 
\]
By the results obtained in Section \ref{sec: preliminaries}, we have that $S(q,p,m) \ll \sqrt{q}$. Estimating via \eqref{eq: elementary estimate theta},
\begin{equation}
\label{eq: bound theta near p/q} 
	\theta\biggl(\frac{p}{q} +\zeta\biggr) \ll 
	\frac{\abs{S(q,p)}}{q\abs{\zeta}^{1/2}} + \frac{\sqrt{q}\abs{\zeta}^{1/2}}{(\Im \zeta)^{1/2}}. 
\end{equation}

Let $N$ be such that $n\ge N$ implies $\tau_{n}\le \tau(\rho) + \eps'$ whenever $q_{n} \not\equiv 2 \Mod 4$, and where $\eps'$ is such that $1/(2\tau(\rho) + 2\eps') = 1/(2\tau(\rho)) - \eps$. Set $\delta \coloneqq 2\abs{\rho-r_{N}}$. For $z$ with $\abs{z} \le \delta$, let $n$ be the unique integer larger than $N$ such that $2\abs{\rho - r_{n+1}} < \abs{z} \le 2\abs{\rho - r_{n}}$, and  set $\zeta= z + (\rho - r_{n+1})$. Then
\begin{equation}
\label{eq: zeta comparable to z}
\frac{1}{2}\abs{z} \le \abs{\zeta} \le \frac{3}{2}\abs{z}, \quad \Im \zeta = \Im z = y.
\end{equation}

Suppose first that $q_{n} \not\equiv 2\Mod 4$. We then apply \eqref{eq: bound theta near p/q} with $p=p_{n+1}$, $q=q_{n+1}$. For the second term, we use \eqref{eq: relation q_{n} and q_{n+1}} to see that
\[
	\sqrt{q_{n+1}} \le q_{n}^{\frac{\tau_{n}-1}{2}} = \abs{\rho-r_{n}}^{\frac{1}{2\tau_{n}}-\frac{1}{2}} \le \sqrt{2}\abs{z}^{\frac{1}{2\tau_{n}}-\frac{1}{2}}.
\]
Since $1/(2\tau_{n}) \ge 1/(2\tau(\rho)) - \eps$ (because $q_{n} \not\equiv 2\Mod 4$) and $\abs{\zeta} \asymp \abs{z}$,  this second term is of the desired order. The first term vanishes if $q_{n+1} \equiv 2\Mod 4$, while otherwise we have 
\[ 
	\frac{1}{\sqrt{q_{n+1}}} = \abs{\rho-r_{n+1}}^{\frac{1}{2\tau_{n+1}}} \leq \abs{z}^{\frac{1}{2\tau(\rho)} - \eps}, 
\]
so this first term is also of the desired order.

Suppose now that $q_{n} \equiv 2 \Mod 4$. We then apply \eqref{eq: bound theta near p/q} with $p=p_{n}$, $q=q_{n}$ and get
\begin{align*}
	\theta(\rho+z) 	&= \theta\biggl(\frac{p_{n}}{q_{n}} + \biggl(\zeta + \frac{p_{n+1}}{q_{n+1}} - \frac{p_{n}}{q_{n}}\biggr)\biggr) \ll \frac{\sqrt{q_{n}}}{\sqrt{y}}\abs{\zeta + \frac{(-1)^{n}}{q_{n+1}q_{n}}}^{1/2} \\
				&= \frac{1}{\sqrt{q_{n+1}}{\sqrt{y}}}\abs{\zeta q_{n}q_{n+1} + (-1)^{n}}^{1/2} \ll \frac{1}{\sqrt{q_{n+1}}{\sqrt{y}}}.
\end{align*}
In the first estimate we employed \eqref{eq: det continued fractions}, while in the last estimate we used that $\abs{\zeta} \ll \abs{\rho-r_{n}} \le 1/(q_{n}q_{n+1})$. Since $q_{n} \equiv 2\Mod 4$, we have that $q_{n+1} \not\equiv 2\Mod 4$ (by \eqref{eq: det continued fractions}), so that we can bound $(q_{n+1})^{-1/2}$ like before.
\end{proof}

Using this bound for $\theta$ near $\rho$, we now deduce the lower bound for $\alpha(\rho)$.
Pick $\eps>0$ arbitrarily and use Proposition \ref{prop: bound theta near rho} to find a $\delta>0$ such that the bound \eqref{eq: estimate theta near rho} holds for $\abs{z}<\delta$.
Suppose $\abs{h} < \delta/\sqrt{2}$. We use again \eqref{R eq: key}, so that
\[
	\phi(\rho+h) -\phi(\rho) 
	= -\frac{1}{2}h + \frac{1}{2}\lim_{y\to0^{+}} \int_{\I y}^{h+\I y}\theta(\rho+z)\dif z.
\]
By Cauchy's theorem, the limit of this integral equals
\[
	\int_{0}^{\I \abs[0]{h}}\theta(\rho+z)\dif z + \int_{\I \abs[0]{h}}^{h+\I \abs[0]{h}}\theta(\rho+z)\dif z - \int_{h}^{h+\I \abs[0]{h}}\theta(\rho+z)\dif z \eqqcolon I_{1} + I_{2} + I_{3}.
\] 
Using the bounds \eqref{eq: estimate theta near rho}, we get
\begin{align*}
I_{1}	&\ll \int_{0}^{\abs[0]{h}}y^{-\frac{1}{2}+\frac{1}{2\tau(\rho)}-\eps} \ll \abs{h}^{ \frac{1}{2} + \frac{1}{2\tau(\rho)} - \eps}, \\
I_{2} &\ll \abs{h}^{-\frac{1}{2}+\frac{1}{2\tau(\rho)}-\eps} \cdot \abs{h} = \abs{h}^{ \frac{1}{2} + \frac{1}{2\tau(\rho)} - \eps}, \\
I_{3} &\ll \abs{h}^{-\frac{1}{2}+\frac{1}{2\tau(\rho)}-\eps} \cdot \abs{h} + \abs{h}^{\frac{1}{2\tau(\rho)}-\eps}\int_{0}^{\abs[0]{h}}y^{-1/2}\dif y \ll  \abs{h}^{ \frac{1}{2} + \frac{1}{2\tau(\rho)} - \eps}.
\end{align*}
Since $\eps$ was arbitrary, $\alpha(\rho) \ge 1/2 + 1/(2\tau(\rho))$. A fortiori, this lower bound also holds for the H\"older exponent at $\rho$ of the real and imaginary part of $\phi$.

\subsection{The upper bound for $\alpha(\rho)$}
\label{subsection: upper bound}

An upper bound for the H\"older exponent at $\rho$ can be obtained from the expansion of $\phi$ at rationals, and was first done by Duistermaat \cite[Proposition 5.2]{Duistermaat}. For the sake of being self-contained, we repeat his proof here.

Let $\eps>0$ be arbitrary, and let $(r_{l})_{l}$ be a subsequence of $(r_{n})_{n}$ with the properties that $q_{l}\not \equiv 2\mod 4$ and that $\tau_{l} \ge \tau(\rho)-\eps$. We will construct a sequence of points $(h_{l})_{l}$ such that $h_{l} \to 0$ and $\phi(\rho+h_{l}) -\phi(\rho)$ is bounded from below by a constant multiple of $\abs{h_{l}}^{1/2+1/(2(\tau(\rho)-\eps))}$. We will do this by exploiting the square root behavior of $\phi$ in $r_{l}$. Set
\begin{equation}
\label{eq: lambda}
x_{l} = \lambda\abs{\rho-r_{l}},
\end{equation}
where $\lambda$ is a fixed positive constant, independent of $l$, to be determined later. 
Using Theorem \ref{th: behavior phi at rationals} and Theorem \ref{th: evaluation S(q,p)} we see that
\[  
	\abs{\phi(r_{l}+x_{l}) -\phi(r_{l})} \geq \frac{x_{l}^{1/2}}{\sqrt{2q_{l}}} -\frac{1}{2}x_{l} + O\bigl((q_{l}x_{l})^{3/2}\bigr), 
\]
where the big-$O$ constant is absolute (independent of $q_{l}$). By equation \eqref{eq: lambda}, and using that $\abs{\rho-r_{l}} \leq q_{l}^{-2}$, this is at least
\[ 
	 \frac{\sqrt{\lambda}}{\sqrt{q_{l}}}\abs{\rho-r_{l}}^{1/2}\biggl( \frac{1}{\sqrt{2}} - \frac{1}{2\sqrt{q_{l}}}\sqrt{\lambda} + O(\lambda)\biggr). 
\] 
If we now fix a $\lambda>0$ sufficiently small, then for $q_{l}$ sufficiently large
\[ 
	\abs{\phi(r_{l}+x_{l}) -\phi(r_{l})} \gg \frac{\sqrt{\lambda}}{\sqrt{q_{l}}}\abs{\rho-r_{l}}^{1/2}.
\] 
Using that $q_{l}^{-(\tau(\rho)-\eps)}\geq \abs{\rho-r_{l}}$, we get 
\[ 
	\abs{\phi(r_{l}+x_{l}) -\phi(r_{l})} \gg \abs{\rho -r_{l}}^{\frac{1}{2}+\frac{1}{2(\tau(\rho)-\eps)}} \asymp \abs{x_{l}}^{\frac{1}{2}+\frac{1}{2(\tau(\rho)-\eps)}}.
\]
Finally, since $\abs[0]{\phi(\rho) - \phi(r_{l})}$ and $\abs[0]{\phi(\rho) - \phi(r_{l}+x_{l})}$ are not both smaller than $\abs[0]{\phi(r_{l}+x_{l})-\phi(r_{l})}/2$, we can take $h_{l}=r_{l}-\rho$ or $h_{l}=r_{l}+x_{l}-\rho$ such that $\abs[0]{\phi(\rho) - \phi(\rho+h_{l})}$ is maximal, and we get
\[ 
	\abs{\phi(\rho) - \phi(\rho + h_{l})} \gg \abs{h_{l}}^{\frac{1}{2}+\frac{1}{2(\tau(\rho)-\eps)}}, \quad h_{l}\to 0. 
\]
Since $\eps$ was arbitrary, this shows that $\alpha(\rho) \le 1/2+1/(2\tau(\rho))$.

With a small modification, the above argument shows that the same upper bound also holds for the H\"older exponent at $\rho$ of the real and imaginary part of $\phi$. Indeed, using the same notation as above, we now define $x_{l}$ by setting $\abs{x_{l}} = \lambda\abs{\rho-r_{l}}$ and by choosing the sign of $x_{l}$ so that $r_{l}+x_{l}$ lies on the side where the square root behavior is present (see Table \ref{tab: behavior Re(phi(p/q+h)-phi(p/q))} for $\Re\phi$; for $\Im\phi$ one can make a similar table). 

\section{Behavior at irrationals for fractional integrals of modular forms}
\label{appendix}

As we indicated in the introduction, our ideas can also be applied to simplify the proof of a theorem of Pastor on the pointwise H\"{o}lder exponents of modular forms at the irrationals. 

We adopt here similar notations as in Pastor's paper \cite{Pastor}. We let $g$ be a modular form of weight $k$ for a subgroup $\Gamma$ of finite index of $\SL(2,\Z)$, let\footnote{For simplicity, we assume (without loss of generality) that the modular form is holomorphic at the cusp at $\infty$ and that this cusp has width 1.} 
\[
	g(z) = \sum_{n=0}^{\infty}c_{n}e(nz)
\]
 be its Fourier expansion at $\infty$, and define for $a>0$ its $a$-fractional integral as
\begin{equation*}
 	g_{a}(z) \coloneqq \sum_{n=1}^{\infty}\frac{c_{n}}{n^{a}}e(nz),
\end{equation*}
which converges uniformly for $z=x\in\R$ if $a>k$ in general, and for $a>k/2$ whenever $g$ is a cusp form \cite[Theorem~2.1(2)]{Pastor}. Let $\beta(\rho)$ be the pointwise H\"{o}lder exponent of $g_{a}$ at an irrational number $\rho$. It was shown in \cite{Chamizo2017} that $\beta(\rho)=a-k/2$ if $g$ is a cusp form. The corresponding computation of $\beta(\rho)$ when $g$ is not a cusp form is one of the main results from \cite{Pastor}. In the remainder of this section we assume that $g$ is \emph{not a cusp form} and that $a>k$.

\begin{theorem}[{\cite[Theorem~2.3(2)]{Pastor}}]
\label{lem: Pastor 2.3}
Let $\rho$ be irrational and let\footnote{One always has $\tau(\rho)\ge2$.}
\begin{equation}
\label{eq: tau modular}
 \tau(\rho) \coloneqq \sup\biggl\{\tau: \abs[2]{\rho - \frac{p}{q}} \ll \frac{1}{q^{\tau}} \text{ for infinitely many noncuspidal rationals } \frac{p}{q}\biggr\}.
\end{equation}
Then, the H\"older exponent $\beta(\rho)$ of $g_{a}$ at $\rho$ is given by 
\[
\beta(\rho)= a - k\left(1-\frac{1}{\tau(\rho)}\right).
\]
\end{theorem}

Note that the case of Riemann's (complex) function \eqref{eq: Riemann complex function} corresponds to $g$ being the theta function, which is modular of weight $1/2$ and for which the noncuspidal rationals are precisely those rationals $p/q$ with $q\not\equiv 2 \Mod 4$.

In order to prove Theorem \ref{lem: Pastor 2.3}, Pastor uses a Tauberian argument involving the wavelet transform, in combination with bounds of $g$ near rationals. In this section, we show that the evaluation of the H\"older exponent $\beta(\rho)$ follows directly from these bounds of $g$ by using basic complex analysis.
 The bounds on $g$ are expressed in the following lemma of Pastor, which extends Proposition \ref{prop: bound theta near rho} to general modular forms, and which we shall use as a black box.
\begin{lemma}[{\cite[Lemma 3.3]{Pastor}}]
\label{lem: Pastor 3.3}
Let $\rho$ be irrational and $\tau=\tau(\rho)$ be given by \eqref{eq: tau modular}. For each $\eps>0$, the following bounds hold:
\begin{equation}
\label{eq: bound 1 app}
	g(x+\I y) \ll y^{\frac{k}{\tau}-\eps-k} + y^{-k}\abs{x+\I y -\rho}^{\frac{k}{\tau}-\eps} \qquad\mbox{ for $0<y<1/2$,}
\end{equation}
and 
\begin{equation}
\label{eq: bound 2 app}
g(\rho+\I y)\gg y^{-k+\frac{k}{\tau}+\varepsilon} \qquad\mbox{infinitely often as }y\to0^{+}.
\end{equation}
\end{lemma}

We slightly adapt our method from Section \ref{subsection lower bound} in order to obtain the lower bound for $\beta(\rho)$. The starting point is the following integral representation for $g_{a}$ (\cite[Eq.\ (3.3)]{Pastor}):
\[
	g_{a}(x) = \frac{(2\pi)^{a}}{\I^{a}\Gamma(a)}\int_{x}^{x+\I\infty}(z-x)^{a-1}\bigl(g(z)-c_{0}\bigr)\dif z.
\]
Let $J$ be the integer such that $a-\bigl(1-1/\tau\bigr)k\in (J,J+1]$. Then by taking $\eps$ small enough in \eqref{eq: bound 1 app} from Lemma \ref{lem: Pastor 3.3}, one sees that the integrals\footnote{Note that $a$ is not necessarily an integer. As customary, $\binom{a-1}{j}= (a-1)\dotsm(a-j)/j!$.}
\[
	\frac{(2\pi)^{a}}{\I^{a}\Gamma(a)}\int_{\rho}^{\rho+\I\infty}(-1)^{j}\binom{a-1}{j}(z-\rho)^{a-1-j}\bigl(g(z)-c_{0}\bigr)\dif z \eqqcolon \frac{g_{a}^{(j)}(\rho)}{j!}, \quad j=1,\dotsc, J,
\]
converge absolutely. Let now $h$ be a small number. By Cauchy's theorem we have
\begin{align*}
	&\frac{i^{a}\Gamma(a)}{(2\pi)^{a}} \biggl(g_{a}(\rho+h) - g_{a}(\rho) - \sum_{j=1}^{J}\frac{g_{a}^{(j)}(\rho)}{j!}h^{j}\biggr) \\
		&= \int_{\rho+h}^{\rho+\I h}(z-\rho-h)^{a-1}\bigl(g(z)-c_{0}\bigr)\dif z - \int_{\rho}^{\rho+\I h}\sum_{j=0}^{J}(-1)^{j}\binom{a-1}{j}(z-\rho)^{a-1-j}h^{j}\bigl(g(z)-c_{0}\bigr)\dif z \\
		&+ \int_{\rho+\I h}^{\rho+\I\infty}\biggl((z-\rho-h)^{a-1} - \sum_{j=0}^{J}(-1)^{j}\binom{a-1}{j}(z-\rho)^{a-1-j}h^{j}\biggr)\bigl(g(z)-c_{0}\bigr)\dif z.
\end{align*}
Applying Taylor's theorem and the bounds provided by Lemma \ref{lem: Pastor 3.3} (and the exponential decay of $g(z)-c_{0}$ as $z\to\infty$), one readily sees that this is bounded by a constant times $ \abs{h}^{a-(1-\frac{1}{\tau})k-\eps}$, showing that $\beta(\rho) \ge a-\bigl(1-1/\tau\bigr)k$.

\bigskip
It might also be possible to apply a variant of Duistermaat's method from Section \ref{subsection: upper bound} in combination with an approximate functional equation to deduce the upper bound for $\beta(\rho)$. This would however require to exhibit explicit bounds in terms of the involved parameters of the error term in e.g. \cite[Theorem 2.5]{Pastor}. We rather choose a shorter path and shall show the upper bound via a simple Abelian argument that involves the maximum modulus principle and Pastor's oscillation bound \eqref{eq: bound 2 app}. We now distinguish two cases according to whether $a$ is a positive integer or not.

Assume first that $a$ is a positive integer. Let $\beta$ be such that $g_{a}(\rho+h) = P_{\rho}(h) + O(\,\abs{h}^{\beta})$ for a certain polynomial $P_{\rho}$. We have
\[
	g(z) = c_{0} + \frac{a!}{(2\pi\I)^{a+1}}\oint\frac{g_{a}(\zeta)}{(\zeta-z)^{a+1}}\dif\zeta, \quad \text{for } \Im z > 0,
\]
where the integral is over a counterclockwise oriented circle with centre $z$ in the upper half-plane. Consider now the function $(z-\rho)^{-\beta}\bigl(g_{a}(z)-P_{\rho}(z -\rho)\bigr)$. It is holomorphic for $\Im z>0$, has a continuous extension to $\{z: \Im z \ge 0 \text{ and } z\neq\rho\}$, it is $O(|z-\rho|^{-\beta})$ when $z \to \rho$, and bounded on $\R\setminus\{\rho\}$. Hence, by the Phragm\'en--Lindel\"of theorem\footnote{We apply here the classical Phragm\'en--Lindel\"of principle after a transformation $\zeta = -1/(z-\rho)$ to functions holomorphic in the open sector $0<\arg \zeta<\pi$ with continuous extension to the boundary.}, it is bounded on $\Im z >0$. Let now $z$ be close to $\rho$. Integrating over a circle of radius $\abs{z-\rho}/2$, we get
\[
	g(z) = c_{0} + \frac{a!}{(2\pi\I)^{a+1}}\oint\frac{g_{a}(\zeta) - P_{\rho}(\zeta-\rho)}{(\zeta-z)^{a+1}}\dif\zeta+O(1) \ll 1 + \abs{z-\rho}^{\beta-a}, \quad \text{as } z\to\rho.
\] 
Comparing this to \eqref{eq: bound 2 app} from Lemma \ref{lem: Pastor 3.3}, we see that $\beta\le a -k\bigl(1-1/\tau\bigr)$.

Suppose now that $a$ is not a positive integer, and let $N=\lceil a \rceil$. We have
\[
	g(z) = c_{0} + \frac{1}{\I^{N}(2\pi)^{a}\Gamma(N-a)}\int_{0}^{\infty}t^{N-a}g_{a}^{(N)}(z+\I t)\frac{\dif t}{t}.
\]
If we again assume that $g_{a}(\rho+h) = P_{\rho}(h) + O(\,\abs{h}^{\beta})$, then by the first part we have $g_{a}^{(N)}(z) \ll \abs{z-\rho}^{\beta-N}+1$ as $z\to \rho$. Hence,
\begin{align*}
	g(\rho+\I y) 	&= c_{0} + \frac{1}{\I^{N}(2\pi)^{a}\Gamma(N-a)}\int_{0}^{1}t^{N-a}g_{a}^{(N)}\bigl(\rho+\I(y+t)\bigr)\frac{\dif t}{t} + O(1) \\
				&\ll \int_{0}^{1}t^{N-a}(y+t)^{\beta-N}\frac{\dif t}{t} + 1 \ll y^{\beta-a}, \quad \text{as } y\to 0,
\end{align*}
provided that $\beta<a$, which we may assume without loss of generality. Comparing again with \eqref{eq: bound 2 app}, this yields $\beta\le a -k\bigl(1-1/\tau\bigr)$.


\end{document}